\def\thtext#1{
  \catcode`@=11
  \gdef\@thmcountersep{. #1}
  \catcode`@=12
}
\def\threst{
  \catcode`@=11
  \gdef\@thmcountersep{.}
  \catcode`@=12
}
\def\opt{{\operatorname{opt}}}
\def\ig#1#2#3#4{\begin{figure}[!ht]\begin{center}%
\includegraphics[height=#2\textheight]{ps//#1.ps}\caption{#4}\label{#3}%
\end{center}\end{figure}}
\theoremstyle{plain}
\newtheorem{thm}{Theorem}
\newtheorem{prop}{Proposition}
\newtheorem{cor}[prop]{Corollary}
\newtheorem{ass}[prop]{Assertion}
\newtheorem{lem}[prop]{Lemma}
\theoremstyle{definition}
\newtheorem{dfn}{Definition}
\newtheorem{rk}{Remark}
\newtheorem{constr}{Construction}
\newtheorem{examp}{Example}
\def\ig#1#2#3#4{\begin{figure}[!ht]\begin{center}%
\includegraphics[height=#2\textheight]{#1.eps}\caption{#4}\label{#3}%
\end{center}\end{figure}}
 \def\.{.\spacefactor\@m}
\newcommand{\D}{\Delta}
\newcommand{\e}{\varepsilon}
\newcommand{\g}{\gamma}
\renewcommand{\l}{\lambda}
\renewcommand{\r}{\rho}
\newcommand{\s}{\sigma}
\renewcommand{\t}{\tau}
\newcommand{\N}{\mathbb{N}}
\newcommand{\R}{\mathbb{R}}
\newcommand{\rom}[1]{{\em #1}}
\renewcommand{\)}{\rom)}
\renewcommand{\:}{\colon}
\newcommand{\0}{\emptyset}
\renewcommand{\c}{\circ}
\newcommand{\oPi}{\stackrel{\raise-2pt\hbox{$\c$}}\Pi}
\newcommand{\oW}{\stackrel{\raise-2pt\hbox{$\c$}}W}
\newcommand{\sm}{\setminus}
\renewcommand{\ss}{\subset}
\newcommand{\x}{\times}
\newcommand{\diam}{{\operatorname{diam}}\,}
\newcommand{\GH}{\operatorname{\mathcal{G\!H}}}
\renewcommand{\min}{{\operatorname{min}}\,}
\newcommand{\dis}{{\operatorname{dis}}\,}
\newcommand{\id}{{\operatorname{id}}}
\def\opt{{\operatorname{opt}}}
\newcommand{\cA}{\mathcal{A}}
\newcommand{\cB}{\mathcal{B}}
\newcommand{\cC}{\mathcal{C}}
\newcommand{\cH}{\mathcal{H}}
\newcommand{\cM}{\mathcal{M}}
\newcommand{\cN}{\mathcal{N}}
\newcommand{\cP}{\mathcal{P}}
\newcommand{\cR}{\mathcal{R}}
\def\blfootnote{\xdef\@thefnmark{}\@footnotetext}
\begin{document}

\title{Isometric Embeddings of Bounded Metric Spaces into the Gromov--Hausdorff Class}
\author{Alexander O.~Ivanov, Alexey A.~Tuzhilin}
\date{}
\maketitle

\begin{abstract}
It is shown that any bounded metric space can be isometrically embedded into the Gromov--Hausdorff metric class $\GH$. This result is a consequence of local geometry description of the class $\GH$ in a sufficiently small neighborhood of a generic metric space. This description is interesting in itself. The technique of optimal correspondences and their distortions is used.
\end{abstract}

\blfootnote{The work is supported supported be Russian Scientific Foundation, Project~21--11--00355.}

\section*{Introduction}
\noindent Comparison of metric spaces is of undoubted interest from both theoretical and practical points of view. One possible approach to this general problem is to use some function of the distance between metric spaces: the less two metric spaces ``similar to each other'', the greater the distance between them. The choice of one or another distance function and a particular class of spaces under consideration depends, generally speaking, on the specifics of the problem. At the moment, apparently, the Gromov--Hausdorff distance is most often used for this purpose.

The history of this distance goes back to works of Felix Hausdorff. In 1914, he defined a non-negative symmetric function on pairs of subsets of a metric space $X$ that is equal to the infimum of reals $r$ such that one subset is contained in the $r$-neighborhood of the other one, and vice-versa~\cite{Hausdorff}. F.~Hausdorff proved that this function satisfies the triangle inequality, and, moreover, it is a metric on the family of all non-empty closed bounded subsets of the space $X$. Later on, D.~Edwards~\cite{Edwards} and independently M.~Gromov~\cite{Gromov} generalized the Hausdorff construction to the case of a pair of arbitrary metric space using  isometric embeddings of the spaces into arbitrary ambient spaces, see below formal definitions. The resulting function is now referred as the \emph{Gromov--Hausdorff distance}. Notice that this function is also symmetric, non-negative and satisfies the triangle inequality. Besides, it always vanishes on a pair of isometrical space, therefore isometrical spaces are usually identified in this contexts. On the other hand, the Gromov--Hausdorff distance could be infinite and also can vanishes on a pair of non-isometric spaces. Nevertheless, if one restricts himself to the family $\cM$ of isometry classes of compact metric spaces, then the Gromov--Hausdorff distance satisfies the metric axioms. The corresponding metric space $\cM$ is called the \emph{Gromov--Hausdorff space}. Geometry of this space turns out to be rather tricky, and it is investigated intensively. It is well-known, that $\cM$ is path-connected, Polish  (i.e., separable and complete), geodesic~\cite{INT}, and also, that $\cM$ is not a proper one and has non non-trivial symmetries~\cite{ITSymm}. A detailed introduction to the geometry of the Gromov--Hausdorff distance can be found in ~\cite[Ch.~7]{BurBurIva} and~\cite{IvaTuzHGH}.

The case of arbitrary metric spaces is also very interesting. Many authors often use some modifications of the Gromov--Hausdorff distance. For example, pointed metric spaces are considered, see~\cite{Jen} or~\cite{Herron}. It is assumed that the marked points are identified by the isometric embeddings. In the present paper we continue to investigate the properties  of the classical Gromov--Hausdorff distance on the metric classes $\GH$ and $\cB$ consisting of representatives of isometry classes of all metric space and of all bounded metric spaces, respectively. Here the term  ``class'' is used in the sense of von~Neumann--Bernays--G\"odel set theory (NGBT). At the proper class $\GH$ it turns out to be possible to define correctly the Gromov--Hausdorff distance and an analogue of the metric topology, see~\cite{BIT} and below.  Also in~\cite{BIT} continuous curves in $\GH$ are defined, and it is shown that the Gromov--Hausdorff distance is an intrinsic generalized semi-metric as on $\GH$, so as on $\cB$, i.e., the distance between any two points equals the infimum of the lengths of the curves connecting these points.

In papers~\cite{BogaTuz} and~\cite{ITsT} the geometry of path connected components of the class $\GH$ is investigated (one of those components turns out to be the class $\cB$), and also the path connectivity of the spheres in $\GH$,  in $\cB$, and in $\cM$ is proved for some specific cases. Also in~\cite{ITsT} a concept of a generic metric space is introduced and some properties of such spaces are described, see below.

In the present paper the geometry of a sufficiently small neighborhood of a generic metric space is investigated. It is shown, see Theorems~\ref{thm:diz} and~\ref{thm:LocIsoRcNtoGHn}, that such neighborhoods are isometric to the balls in the metric space $\R^\cN$ with the metric that equals the supremum of the coordinates differences` absolute values (here $\cN$ stands for an appropriate cardinal number). Basing on these results an isometric embedding of an arbitrary bounded metric space into the Gromov--Hausdorff metric class is constructed, see Theorem~\ref{thm:embed}. Notice that the case of non-bounded metric spaces remains open.

\section{Preliminaries}
\noindent Let $X$ be an arbitrary set. By $\#X$ we denote the cardinality of the set $X$, and by $\cP_0(X)$ the set of all its non-empty subsets. A \emph{distance function on the set $X$} is a symmetric mapping $d\:X\x X\to[0,\infty]$ that vanishes at the pairs of the same elements. If $d$ satisfies the triangle inequality, then $d$ is referred as a \emph{generalized semi-metric}. If in addition $d(x,y)>0$ for all $x\ne y$, then $d$ is called a \emph{generalized metric}. At last, if $d(x,y)<\infty$ for all $x,y\in X$, then such function is called a \emph{metric}, and sometimes a \emph{finite metric\/} to underline that $d$ takes finite values only. A set $X$ endowed with a (generalized) (semi-)metric is called a \emph{\(generalized\/\) \(semi-\/\)metric space}.

Below we need the following simple properties of metrics.

\begin{prop}\label{prop:metric}
The following statements are valid.
\begin{enumerate}
\item \label{prop:metric:2} A non-trivial non-negative linear combination of two metrics given at an arbitrary set is a metric at this set.
\item \label{prop:metric:3} A positive linear combination of a metric and a semi-metric given at arbitrary set is a metric at this set.
\end{enumerate}
\end{prop}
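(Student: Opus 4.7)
The plan is to verify directly, for both statements, each of the defining properties of a metric listed in the preliminaries: non-negativity, symmetry, vanishing on pairs $(x,x)$, the triangle inequality, strict positivity off the diagonal, and finiteness. Most of these properties pass through any non-negative linear combination essentially for free, so the only content of the proposition lies in the off-diagonal positivity clause, where (1) and (2) diverge.

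First I would dispose of the routine properties. For a combination $\alpha d_1+\beta d_2$ (or $\alpha d+\beta s$) with non-negative coefficients, symmetry and vanishing on the diagonal are inherited term-by-term from the summands. Non-negativity is clear since each summand and each coefficient is non-negative. The triangle inequality for the combination follows by multiplying the triangle inequality for each of $d_1,d_2$ (respectively $d,s$) by the corresponding non-negative coefficient $\alpha,\beta$ and summing. Finiteness is equally immediate: in (1) both $d_1,d_2$ are metrics, hence finite, so any linear combination is finite; in (2) the metric $d$ is finite and, as $s$ is a (finite) semi-metric in the sense of the preliminaries, the combination is again finite.

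The only step that requires even a moment of thought is strict positivity off the diagonal, and here the difference between (1) and (2) becomes visible. In case (1), for $x\neq y$ both $d_1(x,y)>0$ and $d_2(x,y)>0$, and since the combination is non-trivial at least one of $\alpha,\beta$ is positive, which suffices. In case (2) one cannot rely on the semi-metric term because $s(x,y)$ may vanish for $x\neq y$; this is exactly why the hypothesis requires the combination to be \emph{positive}, i.e., both $\alpha,\beta>0$. Then, for $x\neq y$, the bound $\alpha d(x,y)+\beta s(x,y)\geq \alpha d(x,y)>0$ finishes the argument, the non-negative term $\beta s(x,y)$ only helping.

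I do not anticipate any real obstacle: the proposition is a direct check of the axioms, and the only conceptual point worth highlighting is the asymmetric role of positivity in the two clauses. Nothing more than the definitions from the preliminaries is used.
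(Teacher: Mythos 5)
Your proof is correct; the paper states this proposition without proof, treating it as a routine verification, and your axiom-by-axiom check (with the observation that off-diagonal positivity is the only clause where the hypotheses ``non-trivial'' versus ``positive'' actually matter) is exactly the intended argument.
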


If $X$ is a set with some fixed distance function, then the distance between its points $x$ and $y$ is denoted by $|xy|$. If we need to underline that the distance between $x$ and $y$ is calculated in $X$, than we write $|xy|_X$. Further, if $\g\:[a,b]\to X$ is a continuous curve in $X$, then its \emph{length $|\g|$} is defined as the supremum of the ``lengths of inscribed polygonal lines'', i.e., of the values $\sum_i\big|\g(t_i)\g(t_{i+1})\big|$, where the supremum is taken over all finite partitions $a=t_1<\cdots<t_k=b$ of the segment $[a,b]$. A distance function on $X$ is called  \emph{intrinsic}, if the distance between any two points $x$ and $y$ equals to the infimum of the lengths of curves connecting these points.  A curve $\g$ whose length differs by at most $\e$ from $|xy|$ is called \emph{$\e$-shortest}. If for each pair of points $x$ and $y$ of the space $X$ there exists a curve, whose length equals the infimum of the lengths of curves connecting these points and equals $|xy|$, then the distance function is said to be \emph{strictly intrinsic}, and the metric space $X$ is called \emph{geodesic}.

Let $X$ be a metric space. For any $A,\,B\in\cP_0(X)$ and $x\in X$ put
\begin{gather*}
|xA|=|Ax|=\inf\bigl\{|xa|:a\in A\bigr\},\qquad |AB|=\inf\bigl\{|ab|:a\in A,\,b\in B\bigr\},\\
d_H(A,B)=\max\{\sup_{a\in A}|aB|,\,\sup_{b\in B}|Ab|\}=\max\bigl\{\sup_{a\in A}\inf_{b\in B}|ab|,\,\sup_{b\in B}\inf_{a\in A}|ba|\bigr\}.
\end{gather*}
The function $d_H\:\cP_0(X)\x\cP_0(X)\to[0,\infty]$ is called the \emph{Hausdorff distance}. It is well-known, see for example~\cite{BurBurIva} or~\cite{IvaTuzHGH}, that $d_H$ is a metric on the subfamily $\cH(X)\ss\cP_0(X)$ of all closed bounded subsets of $X$.

Let $X$ and $Y$ be metric spaces. A triple $(X',Y',Z)$ consisting of a metric space $Z$ and two its subsets $X'$ and $Y'$ isometric to $X$ and $Y$, respectively, is called a \emph{realization of the pair $(X,Y)$}. The \emph{Gromov--Hausdorff distance $d_{GH}(X,Y)$ between $X$ and $Y$} is defined as the infimum of reals $r$ such that there exists a realization $(X',Y',Z)$ of the pair $(X,Y)$ with $d_H(X',Y')\le r$.

Notice that the Gromov--Hausdorff distance could take as finite, so as infinite values, and always satisfies the triangle inequality, see~\cite{BurBurIva} or~\cite{IvaTuzHGH}. Besides, this distance always vanishes at each pair of isometric spaces, therefore, due to triangle inequality, the Gromov--Hausdorff distance does not depend on the choice of representatives of isometry classes. But there are examples of non-isometric metric spaces with zero Gromov--Hausdorff distance between them, see~\cite{Ghanaat}.

Since each set can be endowed with a metric (for example, one can put all the distances between different points to be equal to $1$), then representatives of isometry classes form a proper class. This class endowed with the Gromov--Hausdorff distance is denoted by $\GH$. Here we use the concept  \emph{class\/} in the sense of von Neumann--Bernays--G\"odel set theory (NBG).

Recall that in NBG all objects (analogues of ordinary sets) are called \emph{classes}. There are two types of classes: \emph{sets\/}  (the classes that can be elements of other classes), and \emph{proper classes\/} (all the remaining classes). The class of all sets is an example of a proper class. Many standard operations such as  intersection, complementation, direct product, mapping, etc., are well-defined for arbitrary classes.

Such concepts as a \emph{distance function, \(generalized\/\) semi-metric\/} and \emph{\(generalized\/\) metric\/} are defined in the standard way for any class, as for a set, so as for a proper class, because the direct products and mappings are defined. But a direct transfer of some other structures, such as topology, leads to contradictions.  Indeed, if we defined a topology for a proper class by analogy with the case of sets, then this class has to be an element of the topology, that is impossible due to the definition of proper classes. In paper~\cite{BIT} the following construction is suggested.

For each class $\cC$  consider a  ``filtration'' by subclasses $\cC_n$, each of which consists of all the elements of $\cC$ of cardinality at most $n$, where $n$ is a cardinal number. Recall that elements of a class are sets, therefore cardinality is defined for them. A class $\cC$ such that all its subclasses $\cC_n$ are sets is said to be  \emph{filtered by sets}. Evidently, if a class $\cC$ is a set, then it is filtered by sets.

Thus, let $\cC$ be a class filtered by sets. When we say that the class $\cC$ satisfies some property, we mean the following: Each set $\cC_n$ satisfies this property. Let us give several examples.
\begin{itemize}
\item Let a distance function on $\cC$ be given. It induces an ``ordinary'' distance function on each set $\cC_n$. Thus, for each $\cC_n$ the standard objects of metric geometry such as open balls are defined. Notice that the open balls in each $\cC_n$  are sets. The latter permits to construct the standard metric topology $\t_n$ on $\cC_n$ taking the open balls as a base of the topology. It is clear that if $n\le m$, then $\cC_n\ss\cC_m$, and the topology $\t_n$ on $\cC_n$ is induced from $\t_m$.
\item More general, a \emph{topology\/} on the class $\cC$ is defined as a family of topologies $\t_n$ on the sets  $\cC_n$ satisfying the following \emph{consistency condition}: If $n\le m$, then $\t_n$ is the topology on $\cC_n$ induced from $\t_m$. A class endowed with such a topology is referred as a \emph{topological class}.
\item The presence of a topology permits to define continuous mappings from a topological space $Z$ to a topological class $\cC$. Notice that according to NBG axioms,  for any mapping $f\:Z\to\cC$ from the set  $Z$ to the class $\cC$, the image $f(Z)$ is a set, all elements of $f(Z)$ are also some sets, and hence, the union  $\cup f(Z)$ is a set of some cardinality $n$. Therefore, each element of $f(Z)$ is of cardinality at most  $n$, and so, $f(Z)\ss\cC_n$. The mapping $f$ is called \emph{continuous}, if $f$ is a continuous mapping from  $Z$ to $\cC_n$. The consistency condition implies that for any $m\ge n$, the mapping $f$ is a continuous mapping from  $Z$ to $\cC_m$, and also for any $k\le n$ such that $f(Z)\ss\cC_k$ the mapping $f$ considered as a mapping from $Z$ to $\cC_k$ is continuous.
\item The above arguments allow to define \emph{continuous curves in a topological class $\cC$}.
\item Let a class $\cC$ be endowed with a distance function and the corresponding topology. We say that the distance function is \emph{interior\/} if it satisfies the triangle inequality, and for any two elements from $\cC$ such that the distance between them is finite, this distance equals the infimum of the lengths of the curves connecting these elements.
\item Let a sequence $\{X_i\}$ of elements from a topological class $\cC$ be given. Since the family $\{X_i\}_{i=1}^\infty$ is the image of the mapping $\N\to\cC$, $i\mapsto X_i$, and $\N$ is a set, then, due to the above arguments, all the family $\{X_i\}$ lies in some $\cC_m$. Thus, the concept of  \emph{convergence of a sequence in a topological class\/}  is defined, namely, the sequence converges if it converges with respect to some topology $\t_m$ such that $\{X_i\}\ss\cC_m$, and hence, with respect to any such topology.
\end{itemize}

Our main examples of topological classes are the classes $\GH$ and $\cB$ defined above. Recall that the class $\GH$ consists of representatives of isometry classes of all metric spaces, and the class $\cB$ consists of representatives of isometry classes of all bounded metric spaces. Notice that $\GH_n$ and $\cB_n$ are sets for any cardinal number $n$, see~\cite{BIT}.

The most studied subset of $\GH$ is the set of all compact metric spaces. It is called a \emph{Gromov--Hausdorff space\/} and often denoted by $\cM$. It is well-known, see~\cite{BurBurIva, IvaTuzHGH, INT}, that  the Gromov--Hausdorff distance is an interior metric on $\cM$, and the metric space $\cM$ is Polish and geodesic. In~\cite{BIT} it is shown that the Gromov--Hausdorff distance is interior as on the class $\GH$, so as on the class $\cB$.

As a rule, to calculate the Gromov--Hausdorff distance between a pair of given metric spaces is rather difficult, and for today the distance is known for a few pairs of spaces, see for example~\cite{GrigIT_Sympl}. The most effective approach for this calculations is based on the next equivalent definition of the Gromov--Hausdorff distance, see details in~\cite{BurBurIva} or~\cite{IvaTuzHGH}. Recall that a \emph{relation\/} between sets $X$ and $Y$ is defined as an arbitrary subset of their direct product $X\x Y$. Thus, $\cP_0(X\x Y)$ is the set of all non-empty relations between $X$ and $Y$.

\begin{dfn}
For any $X,Y\in\GH$ and any $\s\in\cP_0(X\x Y)$, the \emph{distortion $\dis\s$ of the relation $\s$} is defined as the following value:
$$
\dis\s=\sup\Bigl\{\bigl||xx'|-|yy'|\bigr|:(x,y),\,(x',y')\in\s\Bigr\}.
$$
\end{dfn}

Notice that relations can be considered as partially defined multivalued mapping. For a relation $R\ss X\x Y$ and $x\in X$, we put $R(x)=\big\{y\in Y:(x,y)\in R\big\}$ and call $R(x)$ the \emph{image of the element $x$ under the relation $R$}.  Similarly, for $y\in Y$ put $R^{-1}(y)=\big\{x\in X:(x,y)\in R\big\}$ and call $R^{-1}(y)$ the \emph{pre-image of the element $y$ under the relation $R$}. Notice that $R(x)$ and $R^{-1}(y)$ could be empty. The image $R(A)$ and the pre-image $R^{-1}(B)$ of $A\subset X$ and $B\subset Y$, respectively, are defined. A relation $R\ss X\x Y$ between sets $X$ and $Y$ is called a \emph{correspondence}, if $R(X)=Y$ and $R^{-1}(Y)=X$, i.e., if $R(x)$ and $R^{-1}(y)$ are not empty for any $x\in X$ and any $y\in Y$. Notice that the correspondences can be considered as (everywhere defined) multivalued surjective mappings. By $\cR(X,Y)$ we denote the set of all correspondences between $X$ and $Y$. The following result is well-known.

\begin{ass}\label{ass:GH-metri-and-relations}
For any $X,Y\in\GH$, it holds
$$
d_{GH}(X,Y)=\frac12\inf\bigl\{\dis R:R\in\cR(X,Y)\bigr\}.
$$
\end{ass}

We need the following estimates that can be easily proved by means of Assertion~\ref{ass:GH-metri-and-relations}. By $\D_1$ we denote the one-point metric space.

\begin{ass}\label{ass:estim}
For any $X,Y\in\GH$, the following relations are valid\/\rom:
\begin{itemize}
\item $2d_{GH}(\D_1,X)=\diam X$\rom;
\item $2d_{GH}(X,Y)\le\max\{\diam X,\diam Y\}$\rom;
\item If at least one of $X$ and $Y$ is bounded, then $\bigl|\diam X-\diam Y\bigr|\le2d_{GH}(X,Y)$.
\end{itemize}
\end{ass}

For topological spaces $X$ and $Y$, their direct product $X\x Y$ is considered as the topological space endowed with the standard topology of the direct product. Therefore, it makes sense to speak about \emph{closed relations\/} and \emph{closed correspondences}.

A correspondence $R\in\cR(X,Y)$ is called \emph{optimal\/} if $2d_{GH}(X,Y)=\dis R$. The set of all optimal correspondences between $X$ and $Y$ is denoted by $\cR_\opt(X,Y)$.

\begin{ass}[\cite{IvaIliadisTuz, Memoli}]\label{ass:optimal-correspondence-exists}
For any $X,\,Y\in\cM$, there exists as a  closed optimal correspondence, so as a realization $(X',Y',Z)$ of the pair $(X,Y)$ which the Gromov--Hausdorff distance between $X$ and $Y$ is attained at.
\end{ass}

\begin{ass}[\cite{IvaIliadisTuz, Memoli}]
For any $X,\,Y\in\cM$ and each closed optimal correspondence $R\in\cR(X,Y)$, the family $R_t$, $t\in[0,1]$, of compact metric spaces, where $R_0=X$, $R_1=Y$, and the space $R_t$, $t\in(0,1)$, is the set $R$ endowed with the metric
$$
\bigl|(x,y),(x',y')\bigr|_t=(1-t)|xx'|+t\,|yy'|,
$$
is a shortest curve in $\cM$ connecting $X$ and $Y$, and the length of this curve equals $d_{GH}(X,Y)$.
\end{ass}

Let $X$ be a metric space and $\l>0$  a real number. By $\l\,X$ we denote the metric space obtained from $X$ by multiplication of all the distances by $\l$, i.e., $|xy|_{\l X}=\l|xy|_X$ for any $x,\,y\in X$. If the space $X$ is bounded, then for $\l=0$ we put $\l X=\D_1$.

\begin{ass} \label{ass:l1}
For any $X,Y\in\GH$ and any $\l>0$, the equality $d_{GH}(\l X,\l Y)=\l\,d_{GH}(X,Y)$ holds. If $X,Y\in\cB$, then, in addition, the same equality is valid for $\l=0$.
\end{ass}

\begin{ass}\label{ass:l1l2}
Let $X\in\cB$ and $\l_1,\l_2\ge0$. Then $2d_{GH}(\l_1 X,\l_2 X)=|\l_1-\l_2|\,\diam X$.
\end{ass}

\begin{rk}
If $\diam X=\infty$, then Assertion~\ref{ass:l1l2} is not true in general. For example, let $X=\R$. The space  $\l\,\R$ is isometric to $\R$ for any $\l>0$, therefore $d_{GH}(\l_1\R, \l_2\R)=0$ for any $\l_1,\l_2>0$.
\end{rk}

\section{Metric Cones and Clouds}
\noindent Let $I$ be an arbitrary set of cardinality $n\ge2$ (not necessary a finite one). By $\cN=I^{(2)}$ we denote the family of all its two-element subsets. An element $\{i,j\}\in\cN$ is denoted by $ij$ or by $ji$. Notice that the cardinality $N$ of the set $\cN$ equals $n$, if and only if either $n$ is finite and equals $3$, or $n$ is infinite.

Let $\R^\cN$ be the set of all real-valued functions on $\cN$ considered as a linear space. For $v\in\R^\cN$ by $v_{ij}=v_{ji}$ we denote the values $v(ij)=v(ji)$, i.e., the \emph{coordinates of the vector $v$}. A vector $v\in\R^\cN$ is said to be  \emph{metrical\/} one or simply a \emph{metric}, if $v_{ij}>0$ for all $ij\in\cN$ (non-negative and non-degenerate), and also $v_{ij}+v_{jk}\ge v_{jk}$ for all $ij,jk,ik\in\cN$ (triangle inequality). The subset $C_n\ss\R^\cN$ consisting of all the metrics is called the \emph{metric cone}. Notice that the set $C_n$ is invariant with respect to multiplication by positive numbers.

Let $X$ be an arbitrary metric space of cardinality $n$ and $\eta\:I\to X$ some bijection. Such bijections are referred as  \emph{enumerations of $X$}, and by $B(X)$ we denote the set of all such bijections. Put $\r_X^\eta(ij)=\bigl|\eta(i)\eta(j)\bigr|$, then, due to properties of metrics, we have: $\r_X^\eta\in C_n$. The function $\r_X^\eta$ is called the \emph{distance vector of the space $X$ corresponding to the enumeration $\eta$}.

Conversely, for a vector $v\in C_n$, a set $Z$ having the same cardinality as $I$ has, and an enumeration $\eta\:I\to Z$ of the set $Z$, the function defined on $Z\x Z$ vanishing at the diagonal $\bigl\{(z,z)\bigr\}_{z\in Z}$ and taking each pair  $(z,z')$, $z\ne z'$ to the number $v_{\eta^{-1}(z)\eta^{-1}(z')}$ is a metric, and  $v=\r_Z^\eta$. Thus, the cone  $C_n$ consists of all distance vectors of all metric spaces of cardinality $n$ constructed by all their possible enumerations.

\begin{rk}
If for a finite $n$ we put $I=\{1,\ldots,n\}$, and put in correspondence to each subset $\{i,j\}\ss I$, $i\ne j$, the basic vector $e_{ij}=e_{ji}$ of the space $\R^N$, then we obtain a natural identification of the spaces $\R^N$ and $\R^\cN$.
\end{rk}

Endow $\R^\cN$ with a distance function $|vw|=\frac12\sup_{ij\in\cN}|v_{ij}-w_{ij}|$ for all $v,w\in\R^\cN$. This distance is a generalized metric. A vector $v\in\R^\cN$ is called \emph{bounded}, if there exists a number $c>0$ such that  $|v_{ij}|<c$ for all $ij\in\cN$. Otherwise, the vector $v$ is called \emph{non-bounded}.

It is easy to see that the property of vectors from $\R^\cN$ to be located at a finite distance, i.e., to have a finite difference, is an equivalence relation. The corresponding equivalence classes we call  \emph{clouds}.  A cloud containing the origin  $0\in\R^\cN$ consists of all bounded vectors and is denoted by $\R^\cN_\infty$. It is clear that $\R^\cN_\infty$ is a linear subspace in $\R^\cN$, and the distance from $0$ in it equals the half of the $\sup$-norm $\|v\|_\infty=\sup_{ij\in\cN}|v_{ij}|$.\footnote{The $\sup$-norm can be defined on the whole space $\R^\cN$. The resulting function could take infinite values. Such functions are called  \emph{generalized norms}. A complication appears with absolute homogeneity axiom $\|\l v\|=|\l|\,\|v\|$, because of indeterminacy appearing for $\l=0$ and $\|v\|=\infty$. One of possible ways to bypass the problem is to put $0\cdot\infty=0$ as in measure theory. Under this approach the mapping $(\l,v)\to\|\l v\|$ is not continuous at $(0,v)$ for $\|v\|=\infty$. Another approach is to claim absolute homogeneity for non-zero $\l$ only.}

Describe the other clouds.

\begin{prop}
Each cloud that is different from $\R^\cN_\infty$ is an affine subspace of the form $v+\R^\cN_\infty$, where $v$ is an non-bounded vector. If $v$ is a non-bounded vector, then the vectors $v$ and $\l v$ lie in different clouds for each $\l\ne 1$.
\end{prop}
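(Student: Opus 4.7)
The plan is to reduce everything to the observation that the cloud-equivalence relation is nothing other than agreement modulo a bounded vector, after which both statements become direct consequences of the algebra of bounded functions.

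First I would unpack the definition of a cloud: vectors $v,w\in\R^\cN$ satisfy $|vw|<\infty$ iff $\sup_{ij\in\cN}|v_{ij}-w_{ij}|<\infty$, that is, iff the difference $v-w$ is bounded in the sense defined just above the proposition, i.e., $v-w\in\R^\cN_\infty$. Thus the cloud of any $v\in\R^\cN$ is exactly the set $\{w\in\R^\cN:w-v\in\R^\cN_\infty\}=v+\R^\cN_\infty$. Once this is noted, the first assertion follows: given a cloud $C$, pick any $v\in C$; then $C=v+\R^\cN_\infty$ is an affine subspace, and if $C\ne\R^\cN_\infty$ then $v\notin\R^\cN_\infty$, which is precisely the statement that $v$ is non-bounded. (Any other representative $v'\in C$ differs from $v$ by a bounded vector, hence is also non-bounded, so the description is independent of the choice of $v$.)

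For the second assertion, let $v$ be non-bounded and $\l\ne 1$. By the preceding reduction, $v$ and $\l v$ lie in the same cloud if and only if $v-\l v=(1-\l)v$ is bounded. But for any $\mu\ne 0$ one has $\sup_{ij}|\mu v_{ij}|=|\mu|\sup_{ij}|v_{ij}|=\infty$ whenever $\sup_{ij}|v_{ij}|=\infty$, so with $\mu=1-\l\ne 0$ the vector $(1-\l)v$ is non-bounded and hence $v$ and $\l v$ belong to different clouds.

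The whole argument is essentially a definition chase; the only point that needs a little care is to verify that scalar multiplication by a nonzero real preserves non-boundedness, which is an immediate property of the $\sup$-norm. There is no genuine obstacle, and the proof will fit in a few lines.
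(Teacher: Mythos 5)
Your proof is correct and follows essentially the same route as the paper: both reduce cloud-equivalence to boundedness of the difference $v-w$, and both verify the second claim by noting that $(1-\l)v$ inherits unboundedness from $v$ when $\l\ne1$ (the paper phrases this via a sequence $|v_{i_kj_k}|\to\infty$, you via homogeneity of the $\sup$; these are the same observation). Nothing to add.
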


\begin{proof}
To prove the first statement notice that the condition
$$
2|vw|=\sup_{ij\in\cN}|v_{ij}-w_{ij}|<\infty
$$
is equivalent to boundedness of the vector with coordinates $v_{ij}-w_{ij}$, i.e., to the condition $v-w\in\R^\cN_\infty$.

Pass to the second statement. Let $v\in\R^\cN$ be a non-bounded vector, then there exists a sequence $v_{i_kj_k}$ tending to infinity. Hence,
$$
2\bigl|(\l v)v\bigr|\ge|\l-1|\cdot|v_{i_kj_k}|\to\infty.
$$
Proposition is proved.
\end{proof}

The metric class $\GH$ can be also partitioned into clouds in the similar manner. The property of metric spaces $X$ and $Y$ to be located at a finite Gromov--Hausdorff distance is an equivalence. The equivalence classes are also called \emph{clouds}, the subclass $\cB$ is an example of such a cloud. In~\cite{ITsT} it is shown that the clouds in $\GH$ are the path-connected components of $\GH$.

Let $\GH_{[n]}\ss\GH$ consists of all metric spaces of cardinality $n$. As it have been already mentioned above, $\GH_n$ is a set, and hence, $\GH_{[n]}\ss\GH_n$ is also a set. As in the case of finite spaces, define the mapping  $\pi\:C_n\to\GH_{[n]}$  putting $\pi(v)$ to be equal to the representative of the isometric class of the metric space $I$ of cardinality $n$, such that $|ij|=v_{ij}$, $i,j\in I$, $i\ne j$, and $|ii|=0$, $i\in I$. Notice that this representative is unique in $\GH_{[n]}$. The mapping $\pi$ is called the  \emph{canonical projection}.

\begin{prop}\label{prop:CanonProj1Lip}
The mapping $\pi\:C_n\to\GH_{[n]}$ is $1$-Lipschitz, i.e.,
$$
d_{GH}\bigl(\pi(v),\pi(w)\bigr)\le|vw|.
$$
\end{prop}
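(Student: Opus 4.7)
The plan is to invoke Assertion~\ref{ass:GH-metri-and-relations} and exhibit a single, canonical correspondence between $\pi(v)$ and $\pi(w)$ whose distortion is exactly $2|vw|$, which immediately gives the desired bound.

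First I would unravel the definition of $\pi$: both $\pi(v)$ and $\pi(w)$ are, by construction, metrics on the \emph{same} underlying set $I$ of cardinality $n$, differing only in the values $|ij|_{\pi(v)}=v_{ij}$ versus $|ij|_{\pi(w)}=w_{ij}$. This common indexing is the point of the construction, and it makes the diagonal a natural candidate correspondence.

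Next I would let $R=\bigl\{(i,i):i\in I\bigr\}\subset I\times I$ be the diagonal, which is trivially a correspondence between $\pi(v)$ and $\pi(w)$ since every element of $I$ appears as both a first and a second coordinate. Applying the definition of distortion, for pairs $(i,i),(j,j)\in R$ one has
\[
\bigl||ij|_{\pi(v)}-|ij|_{\pi(w)}\bigr|=|v_{ij}-w_{ij}|
\]
(with the value $0$ when $i=j$), so
\[
\dis R=\sup_{i,j\in I,\,i\ne j}|v_{ij}-w_{ij}|=\sup_{ij\in\cN}|v_{ij}-w_{ij}|=2|vw|.
\]
Then Assertion~\ref{ass:GH-metri-and-relations} gives
\[
d_{GH}\bigl(\pi(v),\pi(w)\bigr)\le\tfrac12\dis R=|vw|,
\]
which is exactly the claim.

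There is no real obstacle here; the only subtlety is conceptual, namely that $\pi(v)$ and $\pi(w)$ are chosen as the canonical representatives of their isometry classes on the \emph{same} set $I$, so the diagonal correspondence makes sense and its distortion reads off the coordinatewise supremum that defines $|vw|$. Everything else is a one-line computation.
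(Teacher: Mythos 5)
Your proof is correct and takes essentially the same route as the paper: the paper's correspondence $R=\{(\eta(i),\mu(i))\}$ built from the two enumerations is exactly your diagonal correspondence once both representatives are identified with the index set $I$, and the distortion computation $\dis R=\sup_{ij\in\cN}|v_{ij}-w_{ij}|=2|vw|$ followed by the appeal to Assertion~\ref{ass:GH-metri-and-relations} is identical. The only cosmetic difference is that the paper phrases the identification via enumerations (and notes separately that the case $|vw|=\infty$ is trivial), whereas you absorb both points into the single formula.
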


\begin{proof}
If $v,w\in\R^\cN$ belong to a same cloud, then put $X=\pi(v)$, $Y=\pi(w)$, and let $\eta\in B(X)$, $\mu\in B(Y)$ be their enumerations such that $v=\r_X^{\eta}$ and $w=\r_Y^{\mu}$, i.e., $\pi(v)=X$ and $\pi(w)=Y$. Further, put $x_i=\eta(i)$ and $y_i=\mu(i)$ that defines a bijective correspondence $R\in\cR(X,Y)$ as follows\/\rom: $R=\big\{(x_i,y_i)\big\}_{i=1}^n$. So,
$$
2|vw|=\sup_{ij\in\cN}|v_{ij}-w_{ij}|=\sup_{ij\in\cN}\bigl||x_ix_j|-|y_iy_j|\bigr|=\dis R\ge2d_{GH}(X,Y),
$$
that is required.

If $|vw|=\infty$, then the inequality is trivial.
\end{proof}

Let $\cB_{[n]}$ stands for the set of all bounded metric spaces of cardinality $n$.

\begin{prop}
We have\/\rom:  $\pi(C_n\cap\R^\cN_\infty)=\cB_{[n]}$. Moreover, $\pi(C_n\cap\cA)\cap\cB_{[n]}=\0$ for any cloud $\cA\ss\R^\cN$ that is different from $\R^\cN_\infty$.
\end{prop}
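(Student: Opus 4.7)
The plan is to prove both parts by reducing everything to the elementary observation that, for any $v\in C_n$, the diameter of the space $\pi(v)$ equals $\sup_{ij\in\cN}v_{ij}$. Indeed, every pairwise distance in $\pi(v)$ between distinct points is of the form $v_{ij}$ for some $ij\in\cN$, and conversely. Since the coordinates of a metric vector are strictly positive, the condition $v\in\R^\cN_\infty$, namely $\sup_{ij\in\cN}|v_{ij}|<\infty$, coincides with $\diam\pi(v)<\infty$.

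For the equality $\pi(C_n\cap\R^\cN_\infty)=\cB_{[n]}$, the inclusion $\subset$ follows at once: if $v\in C_n\cap\R^\cN_\infty$, then $\pi(v)$ has cardinality $n$ and finite diameter, hence belongs to $\cB_{[n]}$. For the reverse inclusion, given $X\in\cB_{[n]}$, I would choose any enumeration $\eta\in B(X)$ and set $v=\r_X^\eta$; by construction $v\in C_n$, its coordinates are bounded by $\diam X<\infty$, so $v\in\R^\cN_\infty$, and $\pi(v)=X$.

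For the second statement, I would invoke the preceding description of clouds: since $\R^\cN_\infty$ is exactly the cloud of $0\in\R^\cN$ and consists of all bounded vectors, any other cloud $\cA$ contains only non-bounded vectors. Thus every $v\in C_n\cap\cA$ satisfies $\sup_{ij\in\cN}v_{ij}=\infty$, whence $\diam\pi(v)=\infty$ and $\pi(v)\notin\cB_{[n]}$, giving $\pi(C_n\cap\cA)\cap\cB_{[n]}=\emptyset$.

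No serious obstacle arises in this argument; the whole proposition is bookkeeping built on the identity $\diam\pi(v)=\sup_{ij}v_{ij}$. The only minor care is that the supremum over $\cN$ already excludes the diagonal, so the vanishing of the metric on pairs $(i,i)$ plays no role, and no separate treatment is needed.
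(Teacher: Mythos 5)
Your argument is correct and is essentially the paper's own (very terse) proof, expanded: both rest on the single observation that boundedness of $\pi(v)$ is equivalent to boundedness of the coordinates of $v$, i.e.\ to $v\in\R^\cN_\infty$, with the second statement following because the clouds partition $\R^\cN$ and every cloud other than $\R^\cN_\infty$ consists of non-bounded vectors. No gap; your version just spells out the two inclusions and the identity $\diam\pi(v)=\sup_{ij\in\cN}v_{ij}$ that the paper leaves implicit.
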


\begin{proof}
Boundedness of the metric space $X$ is equivalent to totally boundedness of all the distances in $X$, and hence, of all the components of the vector $\r^\eta_X$ for an arbitrary enumeration $\eta\in B(X)$.
\end{proof}

\begin{rk}
The image of any cloud from $\R^\cN$ under the canonical projection $\pi$ is contained in some cloud in $\GH$. But clouds that are distinct from $\R^\cN_\infty$ can be mapped into the same cloud in $\GH$. Indeed, let $X=\N$ be the set of positive integers with the natural distance. By $\nu$ we denote the bijective mapping of the set  $A=\{2^{n-1}\}_{n\in\N}$ onto the set $B\ss\N$ of odd numbers, namely, $\nu(2^{n-1})=2n-1$. Let $\mu\:\N\sm A\to\N\sm B$ be an arbitrary bijection. By $\eta\:\N\to\N$ we denote the enumeration coinciding with $\nu$ on $A$ and with $\mu$ on $\N\sm A$. Let $\id\:\N\to\N$ be the identical enumeration. Put $v=\r_\N^\id$ and $w=\r_\N^\eta$. Then
$$
|vw|\ge\sup_{n\in\N}\Bigl|\bigl((2n+1)-(2n-1)\bigr)-(2^n-2^{n-1})\Bigr|=\infty,
$$
therefore $v$ and $w$ belong to different clouds of the corresponding space $\R^\cN$, but $\pi(v)=\pi(w)=\N$.
\end{rk}

The following natural question appears. Let two metric spaces $X$ and $Y$ have the same cardinality, and the Gromov--Hausdorff distance between $X$ and $Y$ be finite. Is it true that between $X$ and $Y$ there exists a bijection with finite distortion? In other words, Is it true that there exist enumerations $\eta\in B(X)$ and $\mu\in B(Y)$ such that the distance between the vectors $\r^\eta_X$ and $\r^\mu_Y$ is finite? The following Example demonstrates that it is not true.

\begin{examp}
Let $X=\{2^n\}_{n\in\N}\ss\R$ and $Y=X\cup\{2^n+1\}_{n\in\N}\ss\R$. Consider the correspondence $R\in\cR(X,Y)$ of the form $\bigl\{(2^n,2^n),(2^n,2^n+1)\bigr\}_{n\in\N}$, then $\dis R=1$, therefore $d_{GH}(X,Y)<\infty$. Let $R'\in\cR(X,Y)$ be an arbitrary bijection. Then for any $k\in\N$ there exist  $(2^m,2^n),(2^p,2^n+1)\in R'$ such that $m,n,p>k$. Since $R'$ is bijective, then $p\ne m$, and hence, $\dis R'\ge|2^p-2^m|-1\ge2^{\min(p,m)}-1\to\infty$ as $k\to\infty$. Thus, $\dis R'=\infty$ for any bijection $R'$.
\end{examp}

\section{Generic Metric Spaces}
The papers~\cite{ITFiniteLoc} and~\cite{IIT} introduce the notion of a generic finite metric space and describe the local geometry of the Gromov--Hausdorff space $\cM$ in a neighborhood of such a space. Using these ideas, we construct an embedding of an arbitrary finite metric space in $\cM$. In the article~\cite{Filin} the notion of a generic metric space is modified and similar results are obtained.

To transfer the concept of a generic space to the case of infinite spaces we need to introduce some numerical characteristics of metric spaces.

Let $X$ be a metric space, $\#X\ge3$. By $S(X)$ we denote the set of all bijections of the set $X$ onto itself and let $\id\in S(X)$ be the identical bijection. Put:
\begin{align*}
s(X)&=\inf\bigl\{|xx'|:x\ne x',\ x,x'\in X\bigr\},\\
t(X)&=\inf\bigl\{|xx'|+|x'x''|-|xx''|:x\ne x'\ne x''\ne x,\,x,x',x''\in X\bigr\},\\
e(X)&=\inf\bigl\{\dis f:f\in S(X),\ f\ne\id\bigr\}.
\end{align*}

A metric space $M\in\GH$, $\#M\ge3$, is said to be \emph{generic}, if all the three values  $s(M)$, $t(M)$, and $e(M)$ are positive.

\begin{rk}
Positivity of $s(M)$ means that the distances between distinct points of $M$ are separated from zero, in particular, the space $M$ is discrete. A space $M$ such that $s(M)>0$ is said to be \emph{totally discrete}.  The value $t(M)$ characterizes non-degeneracy of triangles from $M$. A space $M$ such that $t(M)>0$ is called \emph{totally non-degenerate}. At last, the value $e(M)$ measures non-symmetry of $M$. A space $M$ such that $e(M)>0$ is called  \emph{totally asymmetric}. Thus, a \emph{generic metric space\/} is a metric space that is totally discrete, totally non-degenerate and totally asymmetric simultaneously.
\end{rk}

\begin{rk}
First attempts to investigate infinite generic spaces were undertaken by A.~Filin in his Diploma Thesis (he proved several Items from Theorem~\ref{thm:diz} and an analogue of Proposition~\ref{prop:optimalRelationPartition}).
\end{rk}

It is not difficult to construct an example of generic metric space $M$ of any finite cardinality $n\ge3$. To do that one can start with the the one-distance metric space $\D_n$ and add to all its non-zero distances different real numbers, whose absolute values are at most $1/3$. Then $s(M)>2/3$, $t(M)>0$, and $e(M)>0$, therefore $M$ is a generic space. Now let us show how to construct a generic metric space of arbitrary infinite cardinality. This construction has been suggested to us by Konstantin Shramov.

\begin{constr}\label{constr:Shramov}
Let $X$ be an arbitrary infinite set. Recall that each set can be totally ordered, i.e., there exists a linear order such that each subset contains the least element with respect to the order. It is easy to verify, see for example~\cite[Ch.~2]{Roman}, that there are no transformations preserving a total order except the identical mapping.

Fix some total order on $X$ and denote it by $<$. Consider the oriented graph $G_o$ with vertex set $X$, whose edges are all ordered pairs $(x,y)$ such that $x<y$. It follows from above, that the graph $G_o$ has unique automorphism, namely, the identical mapping. Now let us construct a new oriented graph $H_o$ changing each edge $e=(x,y)$ of the graph $G_o$ by three vertices $u_e$, $v_e$, $w_e$ and four edges $(x,u_e)$, $(u_e,v_e)$, $(v_e,y)$, $(v_e,w_e)$, see Figure~\ref{fig:shramov}.

\ig{shramov}{0.2}{fig:shramov}{Construction of the graph $H$.}

Let $H$ be the non-oriented graph corresponding to $H_o$. Show that the graph $H$ has no non-trivial automorphisms. Notice that the set $X$ coincides with the set of vertices of $H$ having infinite degree, therefore each automorphism $\nu$ of $H$ takes $X$ onto itself. Let $e=(x,y)$ be an edge of the graph $G_o$. We show that $\bigl(\nu(x),\nu(y)\bigr)$ is an edge of the graph $G_0$ also. Assume the contrary, i.e.,  $f=\bigl(\nu(y),\nu(x)\bigr)$ is an edge of the graph $G_0$. The automorphism $\nu$ takes the path $xu_ev_ey$ in the graph $H$ to a path $\nu(x)\nu(u_e)\nu(v_e)\nu(y)$. Since $H$ contains unique path of the length $3$ connecting $\nu(x)$ and $\nu(y)$, and this path is $\nu(y)u_fv_f\nu(x)$, then we conclude that $\nu(v_e)=u_f$ and $\nu(u_e)=v_f$. But the degree of the vertex $v_e$ in the graph $H$ equals $3$, and the degree of the vertex $u_f$ equals $2$, a contradiction. So, $\bigl(\nu(x),\nu(y)\bigr)$ is an edge of the graph $G_o$. Thus, the restriction of each automorphism $\nu$ of the graph $H$ onto $X$ preserves the order, therefore, the restriction of  $\nu$ onto $X$ is the identical mapping. But then the vertices of the paths $xu_ev_ey$ are also fixed, and so, $w_e$ goes to itself also.

Let $V$ be the vertex set of the graph $H$. Notice that in the case of an infinite $X$ the sets $V$ and $X$ have the same cardinality. Define a metric on $V$ as follows: put the distance between non-adjacent vertices to be equal to  $1$, and the distance between adjacent vertices to be equal to $1+\e$, where $0<\e<1$. It is clear that $s(V)=1$, and $t(V)=1-\e>0$. Notice that the distortion of each bijection of the set $V$ onto itself that is not an automorphism of the graph $H$ equals $\e$, therefore, $e(V)=\e$ because $H$ has no non-trivial automorphisms. Thus, $V$ is  generic.
\end{constr}

Next Theorem contains several properties of totally discrete spaces.

\begin{thm}\label{thm:diz}
Let $M\in\GH$ be a totally discrete metric space, i.e., $\#M\ge3$ and $s(M)>0$. Fix an arbitrary $\e\in\bigl(0,s(M)/2\bigr]$. Then for any space $X\in\GH$ such that $r:=d_{GH}(M,X)<\e$ the following statements hold.
\begin{enumerate}
\item\label{thm:diz:1} There exists a correspondence $R\in\cR(M,X)$ such that $\dis R<2\e$.
\item\label{thm:diz:2} For any such a correspondence $R$, if $i,j\in M$, $i\ne j$, then $R(i)\cap R(j)=\0$, and so the family  $D_R=\bigl\{X_i:=R(i)\bigr\}_{i\in M}$ is a partition of the space $X$.
\item\label{thm:diz:3} For any $i,j\in M$, $i\ne j$ and arbitrary $x_i\in X_i$ and $x_j\in X_j$ the inequality $\bigl||x_ix_j|-|ij|\bigr|<2\e$ is valid.
\item\label{thm:diz:4} For all $i\in M$ the inequality $\diam X_i<2\e$ holds.
\item\label{thm:diz:5} If $\e\le s(M)/4$, then the partition $D_R$ is defined uniquely, i.e., if $R'\in\cR(M,X)$ is another partition such that  $\dis R'<2\e$, then $D_{R'}=D_R$.
\item\label{thm:diz:6} If in addition $M$ is totally asymmetric, i.e., $e(M)>0$, and besides, $\e\le s(M)/4$ and $\e<e(M)/4$, then the correspondence $R\in\cR(M,X)$ such that $\dis R<2\e$ is defined uniquely, and hence,  $R$ is an optimal correspondence.
\end{enumerate}
\end{thm}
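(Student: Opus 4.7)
\medskip

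\noindent\textbf{Proof plan.}

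Items \eqref{thm:diz:1}--\eqref{thm:diz:4} are essentially direct consequences of Assertion~\ref{ass:GH-metri-and-relations} and the bound $s(M)\ge 2\e$. For \eqref{thm:diz:1}, since $2r<2\e$, the infimum definition of $d_{GH}$ guarantees a correspondence $R$ with $\dis R<2\e$. For \eqref{thm:diz:2}, if $x\in R(i)\cap R(j)$ for $i\ne j$, the pairs $(i,x),(j,x)\in R$ give $|ij|=\bigl||ij|-|xx|\bigr|\le\dis R<2\e\le s(M)$, contradicting $|ij|\ge s(M)$; since $R$ is a correspondence, the $R(i)$ cover $X$. For \eqref{thm:diz:3}, just apply the distortion bound to $(i,x_i),(j,x_j)\in R$. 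For \eqref{thm:diz:4}, apply it to $(i,x),(i,x')\in R$: $|xx'|=\bigl||ii|-|xx'|\bigr|\le\dis R<2\e$.

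For \eqref{thm:diz:5} the key trick is to show that whenever $X_i=R(i)$ and $X'_j=R'(j)$ share a point, they coincide. Suppose $x\in X_i\cap X'_j$ and let $y\in X_i$ be arbitrary; write $y\in X'_l$ for the (unique by \eqref{thm:diz:2} applied to $R'$) index $l\in M$. From $(i,x),(i,y)\in R$ we get $|xy|<2\e$, and from $(j,x),(l,y)\in R'$ we get $\bigl||jl|-|xy|\bigr|<2\e$, whence $|jl|<4\e\le s(M)$, forcing $l=j$. Thus $X_i\subset X'_j$, and by symmetry $X_i=X'_j$. Since both families partition $X$, this yields $D_R=D_{R'}$.

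For \eqref{thm:diz:6}, use \eqref{thm:diz:5} to obtain a bijection $\pi\in S(M)$ characterized by $R(i)=R'(\pi(i))$. Pick $x_i\in R(i)$, $x_j\in R(j)$ for $i\ne j$: combining $\bigl||x_ix_j|-|ij|\bigr|<2\e$ (from $R$) with $\bigl||x_ix_j|-|\pi(i)\pi(j)|\bigr|<2\e$ (from $R'$) gives $\bigl||ij|-|\pi(i)\pi(j)|\bigr|<4\e$, so $\dis\pi\le 4\e<e(M)$. By definition of $e(M)$ this forces $\pi=\id$, so $R=R'$. Finally, optimality of $R$ follows because for every $\delta>0$ small enough there exists $R_\delta\in\cR(M,X)$ with $\dis R_\delta<2d_{GH}(M,X)+2\delta<2\e$; by uniqueness $R_\delta=R$, and letting $\delta\to0$ gives $\dis R=2d_{GH}(M,X)$.

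The only nontrivial step is \eqref{thm:diz:5}: the chained estimate must gain a factor of two (a $2\e$ from $R'$ plus a $2\e$ from $R$), which is exactly why the hypothesis tightens from $\e\le s(M)/2$ to $\e\le s(M)/4$. The same doubling phenomenon in \eqref{thm:diz:6} (where the two distortion bounds are added across $R$ and $R'$ at a common pair $(x_i,x_j)$) explains the hypothesis $\e<e(M)/4$. No other subtlety seems to arise.
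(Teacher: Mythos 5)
Your proposal is correct and follows essentially the same route as the paper: items (1)--(4) are the same direct distortion estimates, item (5) uses the same $2\e+2\e$ chaining against $s(M)\ge4\e$ (you phrase it as ``intersecting blocks coincide'' where the paper argues by contradiction that one block would meet two blocks of the other partition), and item (6) bounds the distortion of the induced permutation by $4\e<e(M)$ exactly as the paper does. The concluding optimality argument via uniqueness of near-optimal correspondences also matches.
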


\begin{proof}
(\ref{thm:diz:1}) Since
$$
d_{GH}(M,X)=\frac12\inf\bigl\{\dis R:R\in\cR(M,X)\bigr\}<\e,
$$
then there exists a correspondence $R\in\cR(M,X)$ such that $\dis R<2\e$.

(\ref{thm:diz:2}) If there exists a point $x\in X$ such that $x\in R(i)\cap R(j)$, $i\ne j$, then $\#R^{-1}(x)>1$, and so,
$$
\dis R\ge\diam R^{-1}(x)\ge s(M)\ge2\e,
$$
a contradiction. Therefore, different $X_i:=R(i)$ do not intersect each other and form a partition $\{X_i\}_{i\in M}$ of the space $X$.

(\ref{thm:diz:3}) Since $\dis R<2\e$, then for any $i,j\in M$, $i\ne j$, and arbitrary $x_i\in R(i)$, $x_j\in R(j)$ the inequality
$$
\bigl||x_ix_j|-|ij|\bigr|\le\dis R<2\e
$$
holds.

(\ref{thm:diz:4}) Since $\dis R<2\e$, then for each $i\in M$ the inequality $\diam X_i\le\dis R<2\e$ holds.

(\ref{thm:diz:5}) If $\e\le s(M)/4$ and $R'\in\cR(M,X)$ is another correspondence such that $\dis R'<2\e$, then $\big\{X'_i:=R'(i)\big\}_{i\in M}$ is also a partition of $X$ satisfying the properties listed above. If $D_R\ne D_{R'}$, then there exists either some $X'_i$ intersecting some different $X_j$ and $X_k$ simultaneously, or some $X_i$ intersecting some different $X'_j$ and $X'_k$ simultaneously. Indeed, if there is no such an $X'_i$, then each $X'_i$ is contained in some $X_j$, and hence, $\{X'_p\}$ is a sub-partition of $\{X_q\}$. Since the partitions $D_R$ and $D_{R'}$ are supposed to be different, then some element $X_i$ contains several different $X'_j$.

Thus, without loss of generality assume that $X'_i$ intersects some distinct  $X_j$ and $X_k$ simultaneously. Choose arbitrary $x_j\in X_j\cap X'_i$ and $x_k\in X_k\cap X'_i$. Then
$$
2\e>\diam X'_i\ge|x_jx_k|>|jk|-2\e\ge s(M)-2\e,
$$
and so $\e>s(M)/4$, a contradiction.

(\ref{thm:diz:6}) As in the previous Item, assume that $R'\in\cR(M,X)$ satisfies the inequality $\dis R'<2\e$, and hence, as it has been already proved, $D_R=D_{R'}$. It remains to verify that $X_i=X'_i$ for all $i\in M$. Assume the contrary, i.e., there exists a non-trivial bijection $\s\:M\to M$ such that $X_i=X'_{\s(i)}$. Due to assumptions, $\dis\s\ge e(M)>4\e$. The latter means that there exist $i,j\in M$ such that
$$
\Bigl||ij|-\bigl|\s(i)\s(j)\bigr|\Bigr|>4\e.
$$
Put $p=\s(i)$, $q=\s(j)$ and choose arbitrary $x_i\in X_i=X'_p$ and $x_j\in X_j=X'_q$. Then
$$
4\e<\bigl||pq|-|ij|\bigr|=\bigl||pq|-|x_ix_j|+|x_ix_j|-|ij|\bigr|\le
\bigl||pq|-|x_ix_j|\bigr|+\bigl||x_ix_j|-|ij|\bigr|<2\e+2\e,
$$
a contradiction. Thus, it is proved that the correspondence $R$ whose distortion is sufficiently close to $2d_{GH}(M,X)$ is uniquely defined. The latter implies that $R$ is optimal.
\end{proof}

\begin{dfn}
The family $\{X_i\}$ from Theorem~\ref{thm:diz} we call the \emph{canonical partition of the space $X$ with respect to  $M$}.
\end{dfn}

\begin{prop}\label{prop:optimalRelationPartition}
Let $M$ be a totally discrete space, i.e., $\#M\ge3$ and $s(M)>0$. Fix an arbitrary $\e\in\bigl(0,s(M)/8\bigr]$. Then for any  $X,Y\in\GH$ such that $d_{GH}(M,X)<\e$ and $d_{GH}(M,Y)<\e$ the corresponding canonical partitions  $\{X_i\}_{i\in M}$ and $\{Y_i\}_{i\in M}$ with respect to $M$ are uniquely defined and there exists a correspondence $R\in\cR(X,Y)$ such that $\dis R<4\e$. For each such correspondence there exists a bijection $\s\:M\to M$ such that $R$ can be represented in the form $R=\sqcup_{i\in M}R_i$ for some $R_i\in\cR(X_i,Y_{\s(i)})$.
\end{prop}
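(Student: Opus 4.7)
\medskip

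The plan is to use Theorem~\ref{thm:diz} as the black box and reduce everything to a bookkeeping exercise about images and preimages of the partitions. Since $\e\le s(M)/8\le s(M)/4$, part~(\ref{thm:diz:5}) of Theorem~\ref{thm:diz} immediately gives that the canonical partitions $\{X_i\}_{i\in M}$ and $\{Y_i\}_{i\in M}$ are uniquely defined, coming from correspondences $P\in\cR(M,X)$ and $Q\in\cR(M,Y)$ with $\dis P<2\e$ and $\dis Q<2\e$, and satisfying $X_i=P(i)$, $Y_i=Q(i)$.

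To produce a correspondence with small distortion, first I would take the ``diagonal composition''
$$
R_0=\bigl\{(x,y)\in X\x Y:\exists\,i\in M\text{ with }x\in X_i\text{ and }y\in Y_i\bigr\}=\bigsqcup_{i\in M}(X_i\x Y_i).
$$
This is a correspondence since the $X_i$'s cover $X$ and the $Y_i$'s cover $Y$ and neither is empty. To estimate $\dis R_0$ I would split cases: if $(x,y),(x',y')\in R_0$ with $x\in X_i$, $y\in Y_i$, $x'\in X_j$, $y'\in Y_j$ and $i\neq j$, Theorem~\ref{thm:diz}(\ref{thm:diz:3}) gives $\bigl||xx'|-|ij|\bigr|<2\e$ and $\bigl||yy'|-|ij|\bigr|<2\e$, so $\bigl||xx'|-|yy'|\bigr|<4\e$; if $i=j$, Theorem~\ref{thm:diz}(\ref{thm:diz:4}) gives $|xx'|,|yy'|<2\e$ and hence $\bigl||xx'|-|yy'|\bigr|<2\e$. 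Thus $\dis R_0<4\e$.

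The main work is the structural statement for an arbitrary $R\in\cR(X,Y)$ with $\dis R<4\e$. The crucial observation is the separation estimate: for $j\ne k$ and any $y\in Y_j$, $y'\in Y_k$, Theorem~\ref{thm:diz}(\ref{thm:diz:3}) yields $|yy'|>|jk|-2\e\ge s(M)-2\e$, and since $\e\le s(M)/8$ this is at least $6\e>4\e$. So if $x\in X$ and $R(x)$ intersected two different parts $Y_j,Y_k$, picking $y,y'\in R(x)$ in those parts would give $|yy'|=\bigl||xx|-|yy'|\bigr|\le\dis R<4\e$, a contradiction. Hence $R(x)$ is contained in a single part $Y_{j(x)}$. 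Next, if $x,x'\in X_i$, then $|xx'|<2\e$ by Theorem~\ref{thm:diz}(\ref{thm:diz:4}), so for $y\in R(x)$, $y'\in R(x')$ one gets $|yy'|<|xx'|+\dis R<2\e+4\e=6\e$; comparing to the separation estimate forces $j(x)=j(x')$. Define $\sigma(i)$ to be this common value, so that $R(X_i)\ss Y_{\sigma(i)}$.

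Finally I would show $\sigma$ is a bijection by running the same argument with $R^{-1}$ in place of $R$, obtaining $\sigma'\:M\to M$ with $R^{-1}(Y_j)\ss X_{\sigma'(j)}$; picking any $x\in X_i$ and $y\in R(x)\ss Y_{\sigma(i)}$ gives $x\in R^{-1}(y)\ss X_{\sigma'(\sigma(i))}$, hence $\sigma'\c\sigma=\id$, and symmetrically $\sigma\c\sigma'=\id$. Setting $R_i=R\cap(X_i\x Y_{\sigma(i)})$, the inclusions $R(X_i)\ss Y_{\sigma(i)}$ and $R^{-1}(Y_{\sigma(i)})\ss X_i$ imply both that the $R_i$ are pairwise disjoint, that $R=\sqcup_{i\in M}R_i$, and that each $R_i$ is a correspondence in $\cR(X_i,Y_{\sigma(i)})$. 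The only delicate point, where I expect to spend a bit of care, is making sure the numerical threshold $\e\le s(M)/8$ is the sharp one that simultaneously guarantees $s(M)-2\e>4\e$ (for the single-part argument) and $s(M)-2\e>6\e$ (for the constancy of $j(x)$ on $X_i$); both amount exactly to $s(M)\ge 8\e$.
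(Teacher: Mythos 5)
Your proof is correct, and for the main structural claim it follows essentially the same route as the paper: the separation estimate $|yy'|>|jk|-2\e\ge s(M)-2\e\ge6\e$ for points lying in distinct parts $Y_j$, $Y_k$, played against the bound $|yy'|\le|xx'|+\dis R<2\e+4\e=6\e$ for $x,x'\in X_i$ (the paper carries out your two steps as a single computation, directly contradicting $\dis R<4\e$). The one place you genuinely diverge is the existence of a correspondence with $\dis R<4\e$: the paper obtains it in one line from the triangle inequality $d_{GH}(X,Y)\le d_{GH}(X,M)+d_{GH}(M,Y)<2\e$ together with Assertion~\ref{ass:GH-metri-and-relations}, whereas you construct the diagonal composition $R_0=\sqcup_{i\in M}(X_i\x Y_i)$ explicitly. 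That is a perfectly sound and more informative alternative (it exhibits a concrete witness rather than invoking an infimum), but be careful with one point: for infinite $M$ a supremum of terms each strictly less than $4\e$ need not itself be strictly less than $4\e$. Your estimate does go through, because each term is in fact bounded by the fixed constant $\dis P+\dis Q<4\e$ (and by $\max\{\dis P,\dis Q\}<2\e$ in the case $i=j$), so you should quote that uniform bound rather than the termwise strict inequality. Your explicit verification that $\s$ is a bijection via $\s'\c\s=\id$ and that each $R_i=R\cap(X_i\x Y_{\s(i)})$ is a correspondence is more detailed than the paper's closing remark about swapping $X$ and $Y$, but the content is the same.
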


\begin{proof}
Due to the triangle inequality, we have  $d_{GH}(X,Y)\le d_{GH}(X,M)+d_{GH}(M,Y)<2\e$ that implies the existence of $R$.

Further, choose arbitrary $x,x'\in X_i$ and $y\in R(x)$, $y'\in R(x')$, and show that $y$ and $y'$ belong to the same $Y_j$. Assume the contrary, i.e.,  $y\in Y_j$ and $y'\in Y_k$ for some $j\ne k$. Then Theorem~\ref{thm:diz} implies that
$$
|xx'|\le\diam X_i<2\e,\qquad  |yy'|>|jk|-2\e\ge s(M)-2\e\ge6\e,
$$
and hence, $\dis R\ge\bigl||xx'|-|yy'|\bigr|>4\e$, a contradiction.

Swapping $X$ and $Y$ conclude that for any $(x,y),\,(x',y')\in R$, the points $x$ and $x'$ belong to the same element of the canonical partition $\{X_i\}$, if and only if the points $y$ and $y'$ belong to the same element of the canonical partition $\{Y_i\}$ that completes the proof.
\end{proof}

\begin{prop}\label{prop:correspondence-partition}
Let $M\in\GH$ be a totally discrete and totally asymmetric space, i.e., $\#M\ge3$, $s(M)>0$, and $e(M)>0$. Fix an arbitrary  $\e\in\bigl(0,s(M)/8\bigr]$ such that $\e<e(M)/8$. Then for any $X,Y\in\GH$ such that $d_{GH}(M,X)<\e$, $d_{GH}(M,Y)<\e$, the canonical partitions $\{X_i\}_{i\in M}$ and $\{Y_i\}_{i\in M}$ with respect to $M$ are uniquely defined and there exists a correspondence $R\in\cR(X,Y)$ such that $\dis R<4\e$. Moreover, each such correspondence $R$ can be represented in the form $R=\sqcup_{i\in M}R_i$ for some $R_i\in\cR(X_i,Y_i)$.
\end{prop}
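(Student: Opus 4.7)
The plan is to bootstrap Proposition~\ref{prop:optimalRelationPartition}: it already yields a decomposition $R=\sqcup_{i\in M}R_i$ with $R_i\in\cR(X_i,Y_{\s(i)})$ for some bijection $\s\:M\to M$, and the extra hypothesis $\e<e(M)/8$ should be exactly what forces $\s=\id$.

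First I would observe that the preliminary assertions follow from results already in place. Since $\e\le s(M)/8\le s(M)/4$, Theorem~\ref{thm:diz}(\ref{thm:diz:5}) gives uniqueness of the canonical partitions $\{X_i\}_{i\in M}$ and $\{Y_i\}_{i\in M}$. The triangle inequality for $d_{GH}$ gives $d_{GH}(X,Y)<2\e$, hence the existence of some $R\in\cR(X,Y)$ with $\dis R<4\e$. For any such $R$ the hypothesis $\e\le s(M)/8$ is exactly what Proposition~\ref{prop:optimalRelationPartition} requires, and that result supplies both the bijection $\s$ and the block decomposition of $R$.

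The main step is to upgrade $\s$ to the identity. Fix distinct $i,j\in M$, pick $x_i\in X_i$, $x_j\in X_j$, and choose any $y_{\s(i)}\in R_i(x_i)\ss Y_{\s(i)}$ and $y_{\s(j)}\in R_j(x_j)\ss Y_{\s(j)}$. Theorem~\ref{thm:diz}(\ref{thm:diz:3}) applied separately to the pairs $(M,X)$ and $(M,Y)$ gives
$$
\bigl||ij|-|x_ix_j|\bigr|<2\e\quad\text{and}\quad\bigl||y_{\s(i)}y_{\s(j)}|-|\s(i)\s(j)|\bigr|<2\e,
$$
while $\dis R<4\e$ gives $\bigl||x_ix_j|-|y_{\s(i)}y_{\s(j)}|\bigr|<4\e$. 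Two triangle inequalities in $\R$ then yield $\bigl||ij|-|\s(i)\s(j)|\bigr|<8\e$ for every distinct $i,j\in M$, whence $\dis\s\le8\e<e(M)$; the definition of $e(M)$ forces $\s=\id$, and therefore $R_i\in\cR(X_i,Y_i)$, as claimed.

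The only subtle point is the bookkeeping of constants. Both hypotheses $\e\le s(M)/8$ and $\e<e(M)/8$ are tight for the argument: the first feeds Proposition~\ref{prop:optimalRelationPartition} and produces $\dis R<4\e$, while the second ensures that the four accumulated $2\e$-errors (two from Item~(\ref{thm:diz:3}) and a doubled $2\e$ from $\dis R$) sum to something strictly below $e(M)$. Beyond this accounting I do not foresee any genuine obstacle.
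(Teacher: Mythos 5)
Your proposal is correct and follows essentially the same route as the paper: invoke Proposition~\ref{prop:optimalRelationPartition} for the decomposition $R=\sqcup_i R_i$ with a bijection $\s$, then combine the two $2\e$-estimates from Theorem~\ref{thm:diz}(\ref{thm:diz:3}) with $\dis R<4\e$ to bound $\bigl||ij|-|\s(i)\s(j)|\bigr|$ by $8\e<e(M)$ and conclude $\s=\id$. The only cosmetic difference is that you argue directly via $\dis\s\le8\e<e(M)$ while the paper runs the same estimate as a contradiction.
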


\begin{proof}
Proposition~\ref{prop:optimalRelationPartition} implies that there exists a correspondence $R\in\cR(X,Y)$ such that $\dis R<4\e$, and that for each such a correspondence there exists a bijection $\s\:M\to M$ and correspondences $R_i\in\cR(X_i,Y_{\s(i)})$ such that $R=\sqcup_{i\in M}R_i$. It remains to show that $\s$ is identical.

Assume the contrary, then $\dis\s\ge e(M)>8\e$, and hence, there exist $i,j\in M$ such that  for $k=\s(i)$ and $l=\s(j)$ the inequality $\bigl||ij|-|kl|\bigr|>8\e$ holds. But due to the assumptions, for any $x_i\in X_i$, $x_j\in X_j$,  $y_k\in R(x_i)\ss Y_k$, and $y_l\in R(x_j)\ss Y_l$ the inequalities $\bigl||y_ky_l|-|kl|\bigr|<2\e$, $\bigl||x_ix_j|-|ij|\bigr|<2\e$, and $\bigl||x_ix_j|-|y_ky_l|\bigr|\le\dis R<4\e$ are valid. So, we get the following estimate:
\begin{multline*}
\bigl||ij|-|kl|\bigr|=\bigl||ij|-|x_ix_j|+|x_ix_j|-|y_ky_l|+|y_ky_l|-|kl|\bigr|\le \\ \le \bigl||ij|-|x_ix_j|\bigr|+\bigl||x_ix_j|-|y_ky_l|\bigr|+\bigl||y_ky_l|-|kl|\bigr|<8\e,
\end{multline*}
a contradiction.
\end{proof}

\begin{thm}\label{thm:LocIsoRcNtoGHn}
Let $n\ge3$ be a cardinal number, $I$ a set of cardinality $n$, $\cN=I^{(2)}$ the set of all two-element subsets of  $I$, $C_n\ss\R^\cN$ the metric cone, and $\pi\:C_n\to\GH_{[n]}$ the canonical projection. Assume that $\GH_{[n]}$ contains a totally discrete and totally asymmetric space $M$, and let $w\in\pi^{-1}(M)$. Fix some $\e\in\bigl(0,s(M)/8\bigr]$ such that $\e<e(M)/8$, and let $U_\e(w)\ss\R^\cN$ be the corresponding open ball in $\R^\cN$. Then the restriction of the canonical projection $\pi$ onto $U_\e:=U_\e(w)\cap C_n$ is isometric.
\end{thm}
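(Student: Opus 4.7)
The plan is to complement the $1$-Lipschitz estimate from Proposition \ref{prop:CanonProj1Lip} with the reverse inequality, using that $M$, $X := \pi(v_1)$ and $Y := \pi(v_2)$ can all be represented as metrics on the common underlying set $I$, with distance vectors $w$, $v_1$, $v_2$, respectively. For any $v_1, v_2 \in U_\e$ we have $|v_k w| < \e$, hence by Proposition \ref{prop:CanonProj1Lip} also $d_{GH}(M,\pi(v_k)) < \e$, and by the triangle inequality in $\R^\cN$, $|v_1 v_2| < 2\e$.

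First I would compute the canonical partition of $X$ with respect to $M$. The identity map $\id : I \to I$, regarded as a correspondence in $\cR(M,X)$, has distortion $\sup_{ij \in \cN}|w_{ij} - (v_1)_{ij}| = 2|wv_1| < 2\e$, and the associated partition of $X = I$ is the partition into singletons $X_i = \{i\}$. By item (\ref{thm:diz:5}) of Theorem \ref{thm:diz} (whose hypothesis $\e \le s(M)/4$ holds because $\e \le s(M)/8$), this is the unique canonical partition of $X$ with respect to $M$. The same argument gives that the canonical partition of $Y$ with respect to $M$ is $Y_i = \{i\}$, $i \in I$.

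Next I would apply Proposition \ref{prop:correspondence-partition}, whose hypotheses $\e \le s(M)/8$ and $\e < e(M)/8$ are exactly what is assumed in the theorem. Any $R \in \cR(X,Y)$ with $\dis R < 4\e$ decomposes as $R = \sqcup_{i \in I} R_i$ with $R_i \in \cR(X_i,Y_i) = \cR(\{i\},\{i\})$, so $R_i = \{(i,i)\}$ and hence $R$ is the identity bijection on $I$. Its distortion equals $\sup_{ij}|(v_1)_{ij} - (v_2)_{ij}| = 2|v_1 v_2| < 4\e$, so the identity is the unique correspondence of distortion less than $4\e$, while every other correspondence has distortion at least $4\e > 2|v_1 v_2|$. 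By Assertion \ref{ass:GH-metri-and-relations},
$$
2 d_{GH}(\pi(v_1),\pi(v_2)) = \inf_{R \in \cR(X,Y)} \dis R = 2|v_1 v_2|,
$$
which, together with the opposite inequality from Proposition \ref{prop:CanonProj1Lip}, yields $d_{GH}(\pi(v_1),\pi(v_2)) = |v_1 v_2|$, i.e.\ $\pi|_{U_\e}$ is isometric.

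The only conceptual point is the identification of the canonical partitions of $X$ and $Y$ with respect to $M$ as the partitions into singletons: once uniqueness from Theorem \ref{thm:diz}(\ref{thm:diz:5}) is available, this is forced by the fact that the identity correspondence already exhibits such a partition. After that, Proposition \ref{prop:correspondence-partition} does all the work, since the only correspondence between two singletons is the obvious one.
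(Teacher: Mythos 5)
Your proof is correct and rests on the same two pillars as the paper's argument: Theorem~\ref{thm:diz} and Proposition~\ref{prop:correspondence-partition}, followed by the computation of the distortion of the unique small-distortion correspondence and Assertion~\ref{ass:GH-metri-and-relations}. The one place where you genuinely reorganize the argument is the identification of the given vectors with canonical distance vectors. The paper builds the canonical partition of $X$ abstractly, obtains singletons $X_i=\{x_i\}$ with an induced enumeration $\eta_X$, and then must prove $a=\r^{\eta_X}_X$ by showing that every \emph{other} enumeration of $X$ produces a distance vector at distance more than $3\e$ from $w$, hence outside $U_\e$. You sidestep that elimination argument by realizing $M$, $X$, $Y$ on the common underlying set $I$ and observing that the identity of $I$ is already a correspondence in $\cR(M,X)$ of distortion $2|wv_1|<2\e$, which pins down the canonical partition directly; this is shorter and equivalent in substance. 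One small precision: to know that the \emph{indexed} canonical partition is $X_i=\{i\}$ rather than $X_i=\{\s(i)\}$ for some nontrivial bijection $\s$ of $I$ --- which is exactly what Proposition~\ref{prop:correspondence-partition} needs in order to yield $R_i\in\cR(X_i,Y_i)$ with matching indices --- you should cite item~(\ref{thm:diz:6}) of Theorem~\ref{thm:diz} (uniqueness of the \emph{correspondence}, which uses total asymmetry and $\e<e(M)/4$) rather than item~(\ref{thm:diz:5}), which only gives uniqueness of the partition as an unindexed family of sets. Since your hypotheses $\e\le s(M)/8$ and $\e<e(M)/8$ make item~(\ref{thm:diz:6}) available, this is an immediate repair and not a gap.
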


\begin{proof}
Let $w=\r^\eta_M$ for some enumeration $\eta\in B(M)$. For $i\in I$ put $m_i=\eta(i)$.

Choose arbitrary $a,b\in U_\e$, and put $X=\pi(a)$, $Y=\pi(b)$. Since the mapping $\pi$ is $1$-Lipschitz in accordance with Proposition~\ref{prop:CanonProj1Lip}, then $d_{GH}(X,M)\le|aw|<\e$ and $d_{GH}(Y,M)\le|bw|<\e$, and hence, due to Theorem~\ref{thm:diz}, for the spaces $X$ and $Y$ corresponding canonical partitions $\{X_i\}_{m_i\in M}$ and $\{Y_i\}_{m_i\in M}$ are uniquely defined. Let $R_X\in\cR(M,X)$ and $R_Y\in\cR(M,Y)$ be correspondences satisfying $\dis R_X<2\e$, $\dis R_Y<2\e$ and defining these partitions. Due to Theorem~\ref{thm:diz},  $R_X$ and $R_Y$ are uniquely defined and optimal, i.e., $2d_{GH}(M,X)=\dis R_X$ and $2d_{GH}(M,Y)=\dis R_Y$.

For $x\in\R^\cN$ put $s(x)=\inf_{ij\in \cN} x_{ij}$. Notice that $s(X)=s(a)\ge s(w)-2\e=s(M)-2\e\ge6\e$, and similarly $s(Y)\ge6\e$. On the other hand, $\diam X_i<2\e$ and $\diam Y_i<2\e$ for all  $m_i\in M$ in accordance with Theorem~\ref{thm:diz}, therefore all the $X_i$ and $Y_i$ are one-point sets, i.e., $X_i=\{x_i\}$ and $Y_i=\{y_i\}$. Thus, $R_X=\{(m_i,x_i)\}_{i\in I}$ and $R_Y=\{(m_i,y_i)\}_{i\in I}$.

Due to Proposition~\ref{prop:correspondence-partition}, there exists a correspondence $R\in\cR(X,Y)$ such that $\dis R<4\e$, and each such correspondence has the form  $R=\sqcup_{m_i\in M}R_i$, where $R_i\in\cR(X_i,Y_i)$. Since  $X_i$ and $Y_i$ are one-point sets, then $R$ is a bijection consisting of all pairs $(x_i,y_i)$. Thus, $R$ is uniquely defined. Therefore, if the distortion of a correspondence from $\cR(X,Y)$ is less than $4\e$, then it coincides with $R$, and so, $R$ is an optimal correspondence, and hence, $2d_{GH}(X,Y)=\dis R$.

Notice that the mappings $\eta_X\:i\mapsto x_i$ and $\eta_Y\:i\mapsto y_i$ are enumerations of $X$ and $Y$. Therefore, the vectors $\r_X:=\r^{\eta_X}_X$ and $\r_Y:=\r^{\eta_Y}_Y$ from the metric cone $C_n$ are defined. Show that $a=\r_X$ and $b=\r_Y$. To do that, notice that $|aw|<\e$ and on the other hand
$$
|\r_Xw|=\frac12\sup_{ij\in\cN}\bigl||x_ix_j|-|m_im_j|\bigr|=\frac12\dis R_X=d_{GH}(M,X)<\e.
$$
If $\eta'_X$ is an enumeration of $X$ distinct from $\eta$, and $\r'_X:=\r^{\eta'_X}_X$, then $|\r_X\r'_X|=\frac12\dis\s_X$ for the non-identical bijection $\s_X\:X\to X$ corresponding to the renumeration, i.e., $\s_X=\eta'_X\c\eta^{-1}_X$. Let $\s_X(x_i)=x_{i'}$. Then the bijection $\s_M\:M\to M$, $\s_M\:m_i\mapsto m_{i'}$ is defined. Due to assumptions,  $\dis\s_M\ge e(M)>8\e$. But then we have:
\begin{multline*}
|\r'_Xw|=\frac12\sup_{ij\in\cN}\bigl||x_{i'}x_{j'}|-|m_im_j|\bigr|=\\
=\frac12\sup_{ij\in\cN}\bigl||x_{i'}x_{j'}|-|m_{i'}m_{j'}|+|m_{i'}m_{j'}|-|m_im_j|\bigr|\ge
\frac12\sup_{ij\in\cN}\bigl||m_{i'}m_{j'}|-|m_im_j|\bigr|-\\
-\frac12\sup_{ij\in\cN}\bigl||x_{i'}x_{j'}|-|m_{i'}m_{j'}|\bigr|=
\frac12(\dis\s_M-\dis R_X)>\frac12(8\e-2\e)=3\e.
\end{multline*}
So, each point from $\pi^{-1}(X)$ distinct from $\r_X$ lies outside $U_\e$. Similarly for $Y$.

Thus, it is shown that only the points $\r_X$ and $\r_Y$ from $\pi^{-1}(X)$ and $\pi^{-1}(Y)$ lie in $U_\e$, and so, $a=\r_X$ and $b=\r_Y$. It remains to notice that
$$
|ab|=|\r_X\r_Y|=\frac12\sup_{ij\in\cN}\bigl||x_ix_j|-|y_iy_j|\bigr|=\frac12\dis R=d_{GH}(X,Y),
$$
that is required.
\end{proof}

Notice that in the case of infinite cardinality $n$ a space $Z\in\GH_{[n]}$ that is close to a totally discrete and totally asymmetric space $M$ need not be totally discrete (changing points to small subspaces of the same cardinality does not change neither cardinality, nor the distance between $Z$ and $M$). Therefore, the image of the neighborhood $U_\e(w)\cap C_n$ from Theorem~\ref{thm:LocIsoRcNtoGHn} under the canonical projection does not equal to the open ball $U_\e(M)\cap\GH_{[n]}$. In the case of finite $n$ each such space $Z\in\cM_{[n]}$ consists of $n$ points, and the distance $d_{GH}(M,Z)$ attains at some bijection. This bijection in fact transfers enumeration from $M$ to $Z$, and hence, the corresponding vector $\r_Z$ lies in $U_\e(w)\cap C_n$ and so, the canonical projection  $\pi$ is an isometrical mapping from  $U_\e(w)\cap C_n$ onto $U_\e(M)\cap\cM_{[n]}$.

\begin{cor}\label{cor:LocIsoRNtoMn}
Let $n\ge3$ be an integer, $N=n(n-1)/2$, $C\ss\R^N$ the metric cone, and $\pi\:C\to\cM_{[n]}$ the canonical projection. Let  $M\in\cM_{[n]}$ be a totally discrete and totally asymmetric space, and $w\in\pi^{-1}(M)$. Fix an arbitrary $\e\in\bigl(0,s(M)/8\bigr]$ such that $\e<e(M)/8$, and let $U_\e(w)\ss\R^N$ be the corresponding open ball in $\R^N$. Then the restriction of the canonical projection $\pi$ onto $U_\e(w)\cap C$ is isometric. Moreover, if $M$ is generic, i.e., if in addition $t(M)>0$, and $\e\le t(M)/6$, then $U_\e(w)$ lies in the interior of  the cone $C$.
\end{cor}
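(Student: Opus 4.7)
\medskip

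\noindent\textbf{Proof proposal.} The first assertion is just the finite-cardinality case of Theorem~\ref{thm:LocIsoRcNtoGHn}, so the plan is to reduce to it. Using the identification $\R^N=\R^\cN$ from the earlier Remark (via the basis $e_{ij}$), the hypothesis $\e\in\bigl(0,s(M)/8\bigr]$ with $\e<e(M)/8$ matches precisely what Theorem~\ref{thm:LocIsoRcNtoGHn} requires for a totally discrete, totally asymmetric $M$. Applying that theorem yields that $\pi|_{U_\e(w)\cap C}$ is isometric onto its image. The paragraph preceding the corollary in fact tells us more in the finite case: any $Z\in\cM_{[n]}$ with $d_{GH}(M,Z)<\e$ has exactly $n$ points, Theorem~\ref{thm:diz}(\ref{thm:diz:6}) gives a unique optimal correspondence, which is a bijection and hence an enumeration of $Z$, producing a vector $\r_Z\in U_\e(w)\cap C$ with $\pi(\r_Z)=Z$. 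Thus the image is the whole ball $U_\e(M)\cap\cM_{[n]}$, but this surjectivity is not actually needed for the isometry claim.

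For the second assertion, I would directly estimate coordinates. Let $v\in U_\e(w)$; by definition of the distance function on $\R^N$,
$$
|v_{ij}-w_{ij}|\le 2|vw|<2\e\qquad\text{for every }ij\in\cN.
$$
Since $w=\r^\eta_M$ for some enumeration $\eta$ of $M$, the numbers $w_{ij}$ are exactly the distances in $M$, so $w_{ij}\ge s(M)$ and $w_{ij}+w_{jk}-w_{ik}\ge t(M)$ for pairwise distinct $i,j,k\in I$.

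Combining the estimates, for $v\in U_\e(w)$ we obtain, using $\e\le s(M)/8$,
$$
v_{ij}>w_{ij}-2\e\ge s(M)-2\e\ge \tfrac{3}{4}s(M)>0,
$$
and, using $\e\le t(M)/6$,
$$
v_{ij}+v_{jk}-v_{ik}>(w_{ij}+w_{jk}-w_{ik})-6\e\ge t(M)-6\e\ge 0,
$$
with strict inequality coming from the strict inequalities $|v_{ij}-w_{ij}|<2\e$. Hence every $v\in U_\e(w)$ satisfies all defining inequalities of $C$ strictly. Since $N=n(n-1)/2$ is finite, there are only finitely many such inequalities, so the subset of $\R^N$ where all of them hold strictly is open; in particular $U_\e(w)\subset\mathrm{int}\,C$.

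I do not foresee a real obstacle here: the first part is a citation of Theorem~\ref{thm:LocIsoRcNtoGHn} (with the identification $\R^N=\R^\cN$), and the second part is a three-line coordinate estimate. The only subtlety to state carefully is that the strict triangle inequality for $v$ comes from the \emph{open} ball condition $|vw|<\e$ together with $\e\le t(M)/6$; had we taken a closed ball with $\e=t(M)/6$, degenerate triangles could occur on the boundary.
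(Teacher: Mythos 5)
Your proposal matches the paper's own proof: the first assertion is obtained by citing Theorem~\ref{thm:LocIsoRcNtoGHn} (together with the discussion preceding the corollary), and the interior claim is established by the same coordinate estimates $v_{ij}>w_{ij}-2\e\ge s(M)-2\e>0$ and $v_{ij}+v_{jk}-v_{ik}>w_{ij}+w_{jk}-w_{ik}-6\e\ge t(M)-6\e\ge0$, with strictness coming from the open-ball condition. Your added remark that finitely many strict inequalities cut out an open set is a small but welcome extra justification; otherwise the argument is identical.
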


\begin{proof}
All the statements of Corollary except the last one follows from Theorem~\ref{thm:LocIsoRcNtoGHn} and reasonings given after its proof. Let us prove the last statement.

We need to show that each $v\in U_\e(w)$ lies in the interior of the cone $C$, i.e., all the coordinates of the vector $v$ are positive and all the triangle inequalities are strict. Since $|vw|<\e$, then for each $1\le i<j\le n$ the inequality $|v_{ij}-w_{ij}|<2\e$ holds. And since $\e\le s(M)/8$, then for any pairwise distinct $i,j\in\{1,\ldots,n\}$ we have:
$$
v_{ij}>w_{ij}-2\e\ge s(M)-2\e\ge8\e-2\e=6\e>0,
$$
and hence, all the coordinates of the vector $v$ are positive.

Further, for any pairwise distinct $i,j,k\in\{1,\ldots,n\}$ we have:
\begin{multline*}
v_{ij}+v_{jk}-v_{ik}>w_{ij}-2\e+w_{jk}-2\e-(w_{ik}+2\e)=w_{ij}+w_{jk}-w_{ik}-6\e\ge\\
\ge t(M)-6\e\ge6\e-6\e=0,
\end{multline*}
and hence, all the triangle inequalities in $v$ are strict.
\end{proof}

\section{Isometric Embedding of a Bounded Metric Space into the Metric Class $\GH$}
\noindent As a corollary of Theorem~\ref{thm:LocIsoRcNtoGHn} we prove universality of the metric class $\GH$ for bounded metric spaces.

\begin{thm}\label{thm:embed}
Let $X$ be a bounded metric space of cardinality $n$. Then $X$ can be isometrically embedded into $\GH_n$.
\end{thm}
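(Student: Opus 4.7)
The plan is to combine Theorem~\ref{thm:LocIsoRcNtoGHn} with a Kuratowski-type isometric embedding of $X$ into $\R^\cN$. That theorem states that near the distance vector $w$ of a totally discrete, totally asymmetric space $M\in\GH_{[n]}$ the canonical projection $\pi$ is an isometry on $U_\e(w)\cap C_n$, so I would realise $X$ as a family of small perturbations of $w$ inside this ball and then push forward by $\pi$. I will treat the main case $n\ge3$; the cases $n=1$ (trivial) and $n=2$ (send $x\in X$ to a two-point space of diameter $2|xx_0|_X$ for a chosen base point $x_0$, using Assertion~\ref{ass:estim}) are straightforward by hand.

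First, I would build a generic ``background'' space $M\in\GH_{[n]}$: for finite $n\ge3$ by a small generic perturbation of $\D_n$, and for infinite $n$ by Construction~\ref{constr:Shramov}. The key observation is that $s,\,t,\,e$ are homogeneous of degree one under rescaling, so by replacing $M$ by $\l M$ for $\l$ large enough I can arrange $s(M)>8\diam X$, $t(M)>6\diam X$ and $e(M)>8\diam X$ simultaneously. Then I pick $\e$ with $\diam X<\e\le s(M)/8$ and $\e<e(M)/8$, fix an enumeration $\eta\in B(M)$, and set $w=\r^\eta_M\in C_n$. By Theorem~\ref{thm:LocIsoRcNtoGHn}, $\pi$ is an isometry on $U_\e(w)\cap C_n$.

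Next, I would construct a Kuratowski-type map $u\:X\to\R^\cN$. Since $n\ge3$, the cardinality of $\cN$ is at least $n$, so I can fix a base point $x_0\in X$ and an injection $\v\:X\to\cN$, and define $u_x$ by $u_x\bigl(\v(y)\bigr)=2\bigl(|xy|_X-|x_0y|_X\bigr)$ for $y\in X$ and $u_x(\xi)=0$ for $\xi\in\cN\sm\v(X)$. A direct application of the reverse triangle inequality in $X$ then gives $|u_xu_{x'}|_{\R^\cN}=|xx'|_X$, while $\|u_x\|_\infty\le 2|xx_0|_X\le 2\diam X$, so $|u_x|_{\R^\cN}\le\diam X$. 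The candidate embedding is $\Phi(x):=\pi(w+u_x)$, which automatically takes values in $\GH_{[n]}\ss\GH_n$.

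The main technical point I expect to be the crux is verifying that each vector $w+u_x$ actually lies in $U_\e(w)\cap C_n$. The ball condition $|u_x|_{\R^\cN}<\e$ follows from $\diam X<\e$. For membership in $C_n$ I have to check both positivity, $(w+u_x)_{ij}\ge s(M)-2\diam X>0$, and the triangle inequality, $(w+u_x)_{ij}+(w+u_x)_{jk}-(w+u_x)_{ik}\ge t(M)-6\diam X>0$; both hold by the scaling arranged for $M$. Once these inequalities are in place, the isometry of $\pi|_{U_\e(w)\cap C_n}$ combined with the Kuratowski property of $u$ yields $d_{GH}\bigl(\Phi(x),\Phi(x')\bigr)=|u_xu_{x'}|_{\R^\cN}=|xx'|_X$, and $\Phi$ is the desired isometric embedding of $X$ into $\GH_n$.
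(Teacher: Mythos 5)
Your proposal is correct and follows essentially the same route as the paper: a Kuratowski-type isometric embedding of $X$ into $\R^\cN$ with the $\tfrac12\sup$-metric, translated to sit near the distance vector $w$ of a rescaled totally discrete, totally asymmetric space $M$, and then pushed down by the canonical projection $\pi$, which is isometric on $U_\e(w)\cap C_n$ by Theorem~\ref{thm:LocIsoRcNtoGHn}. Two points where you are more explicit than the paper are worth keeping. First, you treat finite $n\ge3$ by the same construction instead of citing the finite-dimensional result separately. Second, and more substantively, you actually verify that the perturbed vectors $w+u_x$ lie in the cone $C_n$, which forces you to also arrange $t(M)>6\diam X$ (achievable by rescaling, since $s$, $t$, $e$ are all homogeneous and Construction~\ref{constr:Shramov} gives $t>0$); the paper only imposes lower bounds on $s(M)$ and $e(M)$ and leaves the triangle-inequality check for $w+f(x)$ implicit, even though the defect $t(M)-\diam X$ could a priori be negative for a space that is merely totally discrete and totally asymmetric. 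Your handling of the factor $2$ coming from the $\tfrac12\sup$ normalization, and of the injection $X\to\cN$ (valid since $\#\cN\ge n$ for $n\ge3$), is also correct.
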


\begin{proof}
For finite $n$ this Theorem is proved in~\cite{IIT}. Therefore we assume that $n$ is an infinite cardinal number. Recall, that in this case $\cN=n$.

Fix some enumeration $\eta$ of the space $X$, put $x_i=\eta(i)$, and define a mapping $f\:X\to\R^n$ as follows: $f(x_i)_j=|x_ix_j|$.

\begin{lem}\label{lem:Kurat}
Define a generalized metric on $\R^n$ as follows\/\rom: $|xy|=\sup_i|x_i-y_i|$. Then the mapping $f$ is isometrical.
\end{lem}

\begin{proof}
Indeed, on one hand, $\big||x_px_j|-|x_qx_j|\big|\le|x_px_q|$ for any $i$, and so,
$$
\big|f(x_p)f(x_q)\big|=\sup_j\big||x_px_j|-|x_qx_j|\big|\le|x_px_q|,
$$
and on the other hand,  for $j=p$ and for $j=q$ the equality attains. Lemma is proved.
\end{proof}

The mapping $f$ is called the Kuratowski embedding~\cite{CKur}, see also~\cite{ITMinFil} and~\cite{IIT} for other examples of similar applications of the Kuratowski embedding. Since $n$ is infinite, we can assume that the Kuratowski embedding takes $X$ to $\R^\cN$.

We denote by $d$ the diameter of the space $X$. Consider a totally discrete and totally asymmetric space $M$ such that $s(M)>8d$ and $e(M)>8d$, and let $w\in\pi^{-1}(M)$. We set $Z=w+f(X)\ss\R^\cN$. Since translations preserve the distance in $\R^\cN$, the space $Z\ss\R^\cN$ is isometric to $X$. We fix some $\e>d$ such that $\e<s(M)/8$ and $\e<e(M)/8$. Then $Z\ss U_\e:=U_\e(w)\cap C_n$. It remains to apply the canonical projection $\pi\:U_\e\to\GH_{[n]}$, which is isometric by Theorem~\ref{thm:LocIsoRcNtoGHn}. The theorem has been proven.
\end{proof}


\end{document}